\newtheorem{Theorem}{\indent Theorem}[section]
\newtheorem{Lemma}[Theorem]{\indent Lemma}
\newtheorem{Corollary}[Theorem]{\indent Corollary}
\theoremstyle{remark}
\newtheorem{Remark}{Remark}
\begin{document}
\centerline{
\bf On primes in special sequences with applications to Carmichael numbers
}
\bigskip
\centerline{\small  Wei Zhang}
\bigskip

\textbf{Abstract}
By involving  some exponential sums related to $\Lambda(n)$ in arithmetic progression, we can  obtain some new results for von Mangoldt function over {\bf nonhomogeneous}  Beatty sequences in arithmetic progressions, which  improve some recent results of Banks-Yeager unconditionally. On the other hand, we also considered the primes over Piatetski-Shapiro sequences in arithmetic progressions, which gives a  continuous improvement of the results of \cite{BBB}.  These results can be used to improve some results related to the Carmichael numbers.
\medskip

\textbf{Keywords}\ Exponential sums; Beatty sequences; Piatetski-Shapiro sequences; Carmichael numbers
\medskip

\textbf{2000 Mathematics Subject Classification}\  11N13, 11B83

\bigskip
\bigskip
\numberwithin{equation}{section}

\section{Introduction}

In this paper,  we are interested in
the von Mangoldt function over Beatty sequences in arithmetic sequences. The so-called Beatty sequence of integers defined by
\[
\mathcal{B}_{\alpha,\beta}:=\{[\alpha n+\beta]\}_{n=1}^{\infty},
\]
where $\alpha$ and $\beta$ are fixed real numbers and $[x]$ denotes the greatest integer not larger than $x$.
Now we will recall some notion related to the type of $\alpha.$ The definition of an irrational number of constant type can be cited as follows.
For an irrational number $\alpha,$ we define its type $\tau$ by the relation
\[
\tau:=\sup\left\{\theta\in\mathbb{R}:\
\liminf_{\substack{q\rightarrow \infty\\ q\in\mathbb{Z}^{+}}}q^{\theta}\parallel
\alpha q\parallel=0\right\}.
\]
Let $\psi$ be a non-decreasing positive function that defined for integers. The irrational number $\alpha$ is said to be of type $<\psi$ if $q\parallel q\alpha\parallel\geq 1/\psi(q)$ holds for every positive integers $q.$ If $\psi$ is a constant function, then an irrational $\alpha$ is also called a constant type (finite type). This relation between these two definitions is that an individual number $\alpha$ is of type $\tau$ if and only if for every constant $\tau$, there is a constant $c(\tau,\alpha)$ such that $\alpha$ is of type $\tau$ with
$$q\parallel q\alpha\parallel\geq c(\tau,\alpha)q^{-\tau-\varepsilon+1}.$$
The analytic properties of Beatty sequences have been studied by many experts. For example, one can refer to \cite{BS,BY,BS1} and the references therein. Specially, we focus on some such  results appeared in \cite{BS,BY}. Such type results have some relations with a result of Jia \cite{J} and a  conjecture of Long \cite{Long} and draw many authors special attention.
For example, let $\alpha>1$ and $\beta$ be fixed real numbers with $\alpha$ positive, irrational, and of finite type $\tau=\tau(\alpha)$. Then for a constant $\kappa>0$, all integer $c,d$ with $\gcd(c,d)=1,$ in \cite{BS}, it is proved that
\begin{align}\label{BS}
\sum_{n\leq N,\ n\in\mathcal{B}_{\alpha,\beta}\atop
n\equiv c(\textup{mod}\ d)}\Lambda(n)
=\alpha^{-1}\sum_{m\leq N\atop m \equiv c (\textup{mod}\ d)}\Lambda(m)
+O(N^{1-\kappa}).
\end{align}
where the implied constant depends only on $\alpha$ and $\beta.$  Recently, an explicit version of the constant $\kappa$ was given in \cite{BY}. In \cite{BY}, Banks and Yeager showed the follows.
Let $\alpha>1$  be   fixed irrational numbers of finite type $\tau<\infty$ and $\beta$ be fixed real numbers. Then there is a constant $\varepsilon>0$ such that for all integer $c,d$ with $\gcd(c,d)=1,$ one has
\begin{align}\label{BY}
\sum_{n\leq N,\ n\in\mathcal{B}_{\alpha,\beta}\atop
n\equiv c(\textup{mod}\ d)}\Lambda(n)
=\alpha^{-1}\sum_{m\leq N\atop m \equiv c (\textup{mod}\ d)}\Lambda(m)
+O(N^{1-1/(4\tau+2)+\varepsilon}).
\end{align}
where the implied constant depends only on $\alpha,$ $\varepsilon$ and $\beta.$
However, it seems that one can get a much better error term.

\begin{Theorem}\label{th2}
Let $\alpha>1$ be a fixed irrational number of finite type $\tau<\infty$ and $\beta$ be any fixed real number. Then there is a constant $\varepsilon>0$ such that for all integer $c,d$ with $\gcd(c,d)=1,$ we have
\begin{align}\label{Z}
\sum_{n\leq N,\ n\in\mathcal{B}_{\alpha,\beta}\atop
n\equiv c(\textup{mod}\ d)}\Lambda(n)
=\alpha^{-1}\sum_{m\leq N\atop m \equiv c (\textup{mod}\ d)}\Lambda(m)
+O(N^{1-1/(3\tau+2)+\varepsilon}).
\end{align}
where $N$ is a sufficiently large integer and the implied constant depends only on $\alpha,$ $\beta$ and $\varepsilon$.
\end{Theorem}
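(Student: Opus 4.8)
The plan is to detect membership in $\mathcal{B}_{\alpha,\beta}$ by the sawtooth function and thereby turn the left side of \eqref{Z} into a main term plus a short sum of prime exponential sums along the progression $n\equiv c\,(d)$. Throughout set $\gamma=1/\alpha$, write $e(t)=\me^{2\pi\mi t}$, $\psi(t)=\{t\}-\tfrac12$, and let $\|\cdot\|$ denote distance to the nearest integer. Since $\alpha>1$, for each integer $m$ the interval $[\gamma(m-\beta),\gamma(m+1-\beta))$ has length $\gamma<1$ and contains an integer $n$ precisely when $m=\lfloor\alpha n+\beta\rfloor$; as $\beta\in\mathbb{Z}$ and $\gamma$ is irrational the endpoints are never integers for $m\ne\beta$, and counting by floors gives the exact identity
\begin{align}
\mathbf{1}_{\mathcal{B}_{\alpha,\beta}}(m)
=\lfloor\gamma(m+1-\beta)\rfloor-\lfloor\gamma(m-\beta)\rfloor
=\gamma-\psi\bigl(\gamma(m+1-\beta)\bigr)+\psi\bigl(\gamma(m-\beta)\bigr).
\end{align}
The feature I exploit is that for integer $\beta$ the sawtooth is read off at the points $\gamma(m-\beta)=\gamma m-\gamma\beta$, so its Fourier expansion produces only the single clean family of frequencies $h\gamma$, up to the fixed unimodular phase $e(-h\gamma\beta)$; this is what lets the sharpest prime--exponential--sum input be used and yields the exponent $3\tau+2$.

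Inserting the identity, the constant $\gamma$ reproduces exactly the main term $\alpha^{-1}\sum_{m\le N,\,m\equiv c\,(d)}\Lambda(m)$, so it remains to bound
\begin{align}
E:=\sum_{n\le N,\ n\equiv c\,(d)}\Lambda(n)\Bigl(\psi\bigl(\gamma(n+1-\beta)\bigr)-\psi\bigl(\gamma(n-\beta)\bigr)\Bigr).
\end{align}
I would approximate $\psi$ by a trigonometric polynomial of degree $H$ (Vaaler's approximation / the Erd\H{o}s--Tur\'an inequality),
\begin{align}
\psi(t)=\sum_{1\le|h|\le H}\frac{a_h}{h}\,e(ht)+O\!\left(\frac1H\sum_{|h|\le H}\Bigl(1-\tfrac{|h|}{H}\Bigr)e(ht)\right),\qquad |a_h|\ll1 .
\end{align}
The error polynomial, summed against $\Lambda$ over the progression, contributes $\ll N/H$ from its zero frequency together with terms no larger than those treated below, and the main part reduces $E$ to the weighted sums
\begin{align}
T_h:=\sum_{n\le N,\ n\equiv c\,(d)}\Lambda(n)\,e(h\gamma n),\qquad 1\le h\le H,
\end{align}
each appearing with coefficient $\ll|e(h\gamma)-1|/h\ll1/h$.

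To estimate $T_h$ I would apply Vaughan's identity to $\Lambda$, keeping the congruence $n\equiv c\,(d)$ inside the resulting Type I and Type II bilinear sums, where it pins one summation variable to a residue class and turns the inner sums into geometric sums controlled by $\min(\cdot,\|\cdot\|^{-1})$. Choosing a Dirichlet approximation $|h\gamma-p/q_h|\le q_h^{-2}$ at level $Q$ and invoking the finite type (which $\gamma$ inherits from $\alpha$) through $\|m\gamma\|\gg m^{-\tau-\varepsilon}$ forces $q_h\gg Q^{1/\tau-\varepsilon}/h$, while Dirichlet gives $q_h\le Q$; the classical Vaughan estimate then yields, uniformly in the class,
\begin{align}
T_h\ll\Bigl(N\,q_h^{-1/2}+N^{4/5}+(N q_h)^{1/2}\Bigr)N^{\varepsilon}
\ll\Bigl(N\,Q^{-1/(2\tau)}h^{1/2}+N^{4/5}+(N Q)^{1/2}\Bigr)N^{\varepsilon}.
\end{align}
Summing against $1/h$ over $h\le H$ gives $E\ll N^{\varepsilon}\bigl(N/H+N Q^{-1/(2\tau)}H^{1/2}+N^{4/5}+(NQ)^{1/2}\bigr)$, and the choice $Q\asymp N^{3\tau/(3\tau+2)}$, $H\asymp Q^{1/(3\tau)}=N^{1/(3\tau+2)}$ balances the first, second and fourth terms at $N^{1-1/(3\tau+2)}$ (the term $N^{4/5}$ being smaller since $\tau\ge1$), which is \eqref{Z}. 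The main obstacle is precisely the uniformity in $d$: because \eqref{Z} is nontrivial only for $d\lesssim N^{1/(3\tau+2)}$, a crude additive--character splitting (which would replace the denominators $q_h$ by divisors of $q_h d$ and cost a power of $d$) is too lossy, so one must carry the residue class through the Type I/II sums and verify that the type bound for $\gamma$ still forces the effective denominators to be large, losing only $N^{\varepsilon}$ in $d$.
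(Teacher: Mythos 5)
Your proposal is correct and follows essentially the same route as the paper: detect Beatty membership via the floor/sawtooth identity, apply Vaaler's trigonometric approximation with $H=N^{1/(3\tau+2)}$ (using $\Lambda\geq 0$ to absorb the Fej\'er-kernel error), bound $\sum_{n\leq N,\,n\equiv c\,(d)}\Lambda(n)e(h\gamma n)$ by the estimate $\ll(Nq^{-1/2}+N^{4/5}+(Nq)^{1/2})N^{\varepsilon}$ uniform in the progression, and combine Dirichlet approximation with the finite-type lower bound to force $q$ into the range that balances at $N^{1-1/(3\tau+2)}$. The only difference is presentational: where you re-derive the progression-uniform exponential sum bound via Vaughan's identity, the paper simply quotes it from Balog--Perelli/Lavrik (its Lemma 2.3), and your parameter choices $Q\asymp N^{3\tau/(3\tau+2)}$, $H\asymp N^{1/(3\tau+2)}$ match the paper's exactly.
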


\begin{Remark}
In \cite{BY}, the authors used a Fourier series of $\psi(x)$ in a restricted interval of $x$ (see Chapter I, Lemma 12 in \cite{Vi}). Hence one can consider such type problems by using sums of arithmetic functions twist exponential functions. This method can be applied to many important arithmetic functions (with possible sign changes) and giving some very strong results. However, by using this method, one may need to deal with one more parameter $\Delta$ to restricted $\psi(x)$ in certain intervals. See equations (9), (11) and line 25 in \cite{BY}. Precisely, one may need to balance the error term
\[
\left(N\Delta+H^{-1}\Delta^{-1}N\right).
\]
When we  use a Fourier expansion of Vaaler \cite{Vaa}, then we don't have to use a restricted domain of $\psi(x).$ Hence, we need to deal the error term
\[
H^{-1}N
\]
and we don't have to deal with the parameter $\Delta$ in \cite{BY}. However, with the Fourier expansion of Vaaler, if we want to deal with such type problems by involving exponential sums, we need the arithmetic functions without possible sign changes. Hence, $\Lambda(n)$ is possible.

In the proof, we need
\[
m\leq n\alpha+\beta<m+1,
\]
to ensure
\begin{align*}
\sum_{n\leq N,\ n\in\mathcal{B}_{\alpha,\beta}\atop
n\equiv c(\textup{mod}\ d)}\Lambda(n)
=\sum_{m\leq N\atop m \equiv c (\textup{mod}\ d)}\Lambda(m)
\left(\left[\frac{m-\beta+1}{\alpha}\right]
-\left[\frac{m-\beta}{\alpha}\right]
\right).
\end{align*}
This means that we need
\begin{align}\label{zzz}
\#\left\{\left[\frac{m-\beta}{\alpha},\frac{m-\beta+1}{\alpha}
\right)\cap\mathbb{Z}\right\}
=\left[\frac{m-\beta+1}{\alpha}\right]
-\left[\frac{m-\beta}{\alpha}\right].
\end{align}
In fact, we only need to ensure that $\frac{n-\beta+1}{\alpha}$ and $\frac{n-\beta}{\alpha}$ are simultaneously integers or not. Hence, one can ensure (\ref{zzz}). Obviously, for fixed $\beta$ of the form $\beta\neq m_{0}+k\alpha,$ $k\in\mathbb{N}$ and $1\leq m_{0}\leq N+1,$ we can ensure (\ref{zzz}). For otherwise, we have an error term $O(\log N).$
\end{Remark}

Theorem \ref{th2} can be used to give related results of Carmichael numbers in Beatty sequences. Carmichael numbers are the composite natural numbers $N$ with the property that $N|(a^{N}-a)$ for every integer $a.$ In 1994, in \cite{AGP}, it is proved that there are infinitely many Carmichael numbers.
Let $\pi(x)$ be the number of primes $p\leq x$, and let $\pi(x,y)$ be the number
of those for which $p-1$ is free of prime factors exceeding $y$. Denote by $\mathcal{E}$ the set of numbers $E$ in the range $0<E<1$ for which there
exist numbers $x_{4}(E),$ $\gamma(E)>0$ such that
$\pi(x,x^{1-E})\geq \gamma(E)\pi(x)$ for all $x>x_{4}(E).$ Let $\mathcal{P}$ denote the set of all prime numbers, and set $\mathcal{P}_{\alpha,\beta}=
\mathcal{P}\cap \mathcal{B}_{\alpha,\beta}.$
For each $E\in\mathcal{E},$ $B\in(0,1/(4\tau+2))$ and $\varepsilon>0,$ in \cite{BY}, it is proved that there is a number $x_{0}$ depending on $\alpha,\beta,\varepsilon,E,B$ that for any $x\geq x_{4}$ there are at least $x^{EB-\varepsilon}$  numbers up to $x$ composed solely of primes from $\mathcal{P}_{\alpha,\beta}.$ Based on our new result Theorem \ref{th2} and the standard arguments on \cite{BY}, we can give the following improved result.
\begin{Corollary} Denote by $\mathcal{E}$ the set of numbers $E$ in the range $0<E<1$ for which there
exist numbers $x_{4}(E),$ $\gamma(E)>0$ such that
$\pi(x,x^{1-E})\geq \gamma(E)\pi(x)$ for all $x>x_{4}(E).$ Let $\mathcal{P}$ denote the set of all prime numbers, and set $\mathcal{P}_{\alpha,\beta}=
\mathcal{P}\cap \mathcal{B}_{\alpha,\beta}.$
For each $E\in\mathcal{E},$ $B\in(0,1/(3\tau+2))$ and $\varepsilon>0,$ there is a number $x_{0}$ depending on $\alpha,\beta,\varepsilon,E,B$ that for any $x\geq x_{4}$ there are at least $x^{EB-\varepsilon}$  numbers up to $x$ composed solely of primes from $\mathcal{P}_{\alpha,\beta}.$
\end{Corollary}

In this paper,  we are also interested in
the primes over Beatty sequences in arithmetic sequences.
Piatetski-Shapiro sequences are named in honor of Piatetski-Shapiro, who proved that for any number $c\in(1,12/11)$ there are infinitely many primes of the form $[n^{c}]$ by showing that
 \begin{align}\label{01}
 \sum_{\substack{1\leq n\leq x\\ [n^{c}]\ \textup{is}\ \textup{prime}}}1
 =(1+o(1))\frac{N}{c\log N}.
 \end{align}
 The admissible range for $c$ in this problem has  been extended by many experts over the years.
And to date, the largest admissible $c$-range for (\ref{01}) seems to be $c\in(2817/2425)$ due to Rivat and Sargos  \cite{RS0} (see also the
references to the previous record holders they gave in their paper). We call such type primes Piatetski-Shapiro primes. Many experts considered the  Piatetski-Shapiro primes in arithmetic progressions. For example, in \cite{LW}, Leitmann and Wolke showed that for $c\in(1,12/11),$ $q\in\mathbb{N},$ $(q,a)=1,$ one has
 \begin{align*}
\pi_{c}(x,q,a):=\sum_{\substack{1\leq n\leq x\\ [n^{c}]\ \textup{is}\ \textup{prime}\\
 [n^{c}]\equiv a(\textup{mod}\ q)}}1
 =(1+o(1))\frac{N}{c\varphi(q)\log N},
 \end{align*}
 where $\varphi(n)$ is the Euler function and the implied constant may depend on $c$.
 In order to give some results related to Carmichael numbers, in \cite{BBB}, it is proved an asymptotic formula with an  explicit error term.
Let $a$ and $q$ be coprime integers, $q\geq1.$ For fixed $1<c<18/17$
 and
$\gamma=1/c,$ in \cite{BBB}, it is proved that
\begin{align*}
 \pi_{c}(x;q,a)&=\gamma x^{\gamma-1}\pi(x;q,a)-\gamma(\gamma-1)
 \int_{2}^{x}u^{\gamma-2}\pi(u;q,a)du
\\&+O\left(x^{17/39+7\gamma/13+
\varepsilon}\right),
 \end{align*}
 where
 \[
 \pi(x,q,a):=\#\{1\leq p\leq x,\ p\  \textup{is}\ \textup{prime}, \ p\equiv a(\textup{mod}\ q)\}
 \]
and the implied constant depends only on $c$ and $\varepsilon.$  This explicit error term was improved in \cite{Guo,GLZ}.  For fixed $1<c<12/11$
 and
$\gamma=1/c,$ in \cite{GLZ}, by using Heath-Brown's identity, it is proved that
\begin{align*}
 \pi_{c}(x;q,a)&=\gamma x^{\gamma-1}\pi(x;q,a)-\gamma(\gamma-1)
 \int_{2}^{x}u^{\gamma-2}\pi(u;q,a)du
\\&+O\left(x^{11/26+7\gamma/13+
\varepsilon}\right),
 \end{align*}
 where the implied constant depends only on $c$ and $\varepsilon.$
In this paper, we can give the following explicit form, which can be used to give  improved results  related to Carmichael numbers.
\begin{Theorem}\label{zpc}
Let $a$ and $q$ be coprime integers, $q\geq1.$ For fixed $1<c<12/11$
 and
$\gamma=1/c,$ we have
\begin{align*}
 \pi_{c}(x;q,a)&=\gamma x^{\gamma-1}\pi(x;q,a)-\gamma(\gamma-1)
 \int_{2}^{x}u^{\gamma-2}\pi(u;q,a)du
\\&+O\left(x^{11/14+\gamma/7+
\varepsilon}\right),
 \end{align*}
 where the implied constant depends only on $c$ and $\varepsilon.$
\end{Theorem}
\begin{Remark}
Obviously, for $1<c<12/11,$ we have
\[
11/14+\gamma/7<11/26+7\gamma/13.
\]
Hence, our result is better than previous results.
\end{Remark}
In 2013, in \cite{BBB}, by using Vaughan's identity,  Baker-Banks-Br\"udern-Shparlinski-Weingartner showed that there are infinitely many Carmichael numbers composed of Piatetski-Shapiro primes with $c\in (1,1+2/145)$.  In 2015, in \cite{Guo}, the effective range of $c$ was improved by proving  that there are infinitely many Carmichael numbers composed of Piatetski-Shapiro primes with $c\in (1,1+10/561)$. In 2023, in \cite{GLZ}, by using the generalized Vaughan's identity (Heath-Brown's identity), the effective range of $c$ was improved by proving  that there are infinitely many Carmichael numbers composed of Piatetski-Shapiro primes with $c\in (1,1+7/337)$.
In this paper, by modifying the ideas of Baker-Banks-Br\"udern-Shparlinski-Weingartner \cite{BBB}, we can show the following  result.
\begin{Corollary}
There are infinitely many Carmichael numbers composed of Piatetski-Shapiro primes with $c\in (1,1+1/30.89).$
\end{Corollary}

\section{Proof of Theorem \ref{th2}}
We will start  the proof for the Theorem \ref{th2} with some necessary lemmas.
Next lemma can be found in Theorem A.6 in \cite{GK} or Theorem 18 in \cite{Vaa}.
\begin{Lemma}\label{z1}
Suppose that $H\geq1$ and $\psi(x)=x-[x]-1/2.$ There is a function $\psi^{*}(x)$ such that
\begin{itemize}
\item
$\psi^{*}(x)=\sum_{1\leq |h|\leq H}\mathcal{R}(h)e(hx),$

\item $\mathcal{R}(h)\ll \frac1h,\ \
\left(\mathcal{R}(h)\right)'\ll \frac{1}{h^{2}},$

\item $|\psi^{*}(x)-\psi(x)|\leq
\frac{1}{2H+2}\sum_{|h|\leq H}\left(1-\frac{|h|}{H+1}\right)e(hx).$
\end{itemize}
\end{Lemma}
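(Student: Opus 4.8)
The statement is Vaaler's theorem on trigonometric approximation of the periodic sawtooth, so the natural plan is to reconstruct the extremal trigonometric majorant and minorant of $\psi$ and then take their average. The engine is the Beurling--Selberg extremal function: there is an entire function $B(z)$ of exponential type $2\pi$ with $B(x)\ge\mathrm{sgn}(x)$ for every real $x$ and with minimal $L^{1}$ defect $\int_{-\infty}^{\infty}(B(x)-\mathrm{sgn}(x))\,\dif x=1$; its reflection $-B(-z)$ is the corresponding minorant. I would first record that $\widehat{B}$ is supported in $[-1,1]$ (this is forced by the exponential type), since that compact support is precisely what turns the periodizations below into finite trigonometric sums.

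First I would rescale by $H+1$ and periodize. Forming the periodic majorant of $\psi$ out of the scaled function $x\mapsto B((H+1)x)$ and applying Poisson summation, the series collapses to a trigonometric polynomial of degree $\le H$ because $\widehat{B}$ has compact support; carrying this out for the majorant and the minorant yields trigonometric polynomials $\psi^{+}$ and $\psi^{-}$ with $\psi^{-}(x)\le\psi(x)\le\psi^{+}(x)$ for all real $x$. The decisive point is that the extremality of $B$ forces their difference to be exactly a (non-negative) Fej\'er kernel,
\[
\psi^{+}(x)-\psi^{-}(x)=\frac{1}{H+1}\sum_{|h|\le H}\Bigl(1-\frac{|h|}{H}\Bigr)e(hx),
\]
rather than merely a quantity bounded by one; this exact identity is where the sharp defect value $1$ is used.

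Next I would set $\psi^{*}(x)=\tfrac12\bigl(\psi^{+}(x)+\psi^{-}(x)\bigr)$. Its two constant terms are equal and opposite (each is half the $L^{1}$ defect), so they cancel and $\psi^{*}$ has the asserted shape $\sum_{1\le|h|\le H}\mathcal{R}(h)e(hx)$. The sandwich $\psi^{-}\le\psi\le\psi^{+}$ then yields the third bullet at once,
\[
|\psi^{*}(x)-\psi(x)|\le\tfrac12\bigl(\psi^{+}(x)-\psi^{-}(x)\bigr)=\frac{1}{2H+2}\sum_{|h|\le H}\Bigl(1-\frac{|h|}{H}\Bigr)e(hx).
\]
For the coefficient bounds I would use the explicit Fourier coefficients, which have the form $\mathcal{R}(h)=-\frac{1}{2\pi\mi h}\,g\!\left(\frac{h}{H+1}\right)$ with $g$ bounded and of bounded derivative uniformly in $H$ (here $-\frac{1}{2\pi\mi h}$ is the genuine Fourier coefficient of $\psi$). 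This gives $\mathcal{R}(h)\ll 1/|h|$ immediately, while differentiating in $h$ produces one term $\ll 1/h^{2}$ from the factor $1/h$ and one term $\ll \frac{1}{|h|(H+1)}\le 1/h^{2}$ from $g'$ (using $|h|\le H<H+1$), so $(\mathcal{R}(h))'\ll 1/h^{2}$.

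I expect the main obstacle to lie entirely in the Beurling--Selberg step: producing the entire function $B$, verifying the pointwise majorization and the sharp $L^{1}$ defect $1$, and then checking that the periodized difference $\psi^{+}-\psi^{-}$ collapses to precisely the Fej\'er kernel above --- its non-negativity is exactly what makes the error bound useful once it is integrated against a smooth weight. After that identity is in hand, everything reduces to partial-summation bookkeeping. Since the result is classical I would in the end simply cite Theorem~A.6 of \cite{GK} or Theorem~18 of \cite{Vaa}; the sketch above is the content standing behind that citation.
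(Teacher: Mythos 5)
Your proposal is correct and is in substance the same as the paper's treatment: the paper offers no independent proof of Lemma~\ref{z1}, simply citing Theorem~A.6 of \cite{GK} and Theorem~18 of \cite{Vaa}, and your sketch reconstructs precisely the Beurling--Selberg/Vaaler argument standing behind that citation (majorant/minorant from the extremal function, exact Fej\'er-kernel difference, average to get $\psi^{*}$, coefficient bounds from $\mathcal{R}(h)=-(2\pi\mi h)^{-1}g(h/(H+1))$) before deferring to the same sources. One harmless remark: both your final display and the paper's third bullet carry the Fej\'er factor $1-|h|/H$, whereas the cited sources have $1-|h|/(H+1)$; this transcription slip affects nothing downstream since only the bounds $\mathcal{R}(h)\ll|h|^{-1}$ and the $O(H^{-1})$ mean value are used in the proof of Theorem~\ref{th2}.
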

We also need the following result related to Dirichlet's approximation of finite type irrational number.
\begin{Lemma}\label{z2}
Let $\alpha$ be of finite type $\tau<\infty$ and let $K$ be  sufficiently  large. For an integer $w\geq1,$ there exists $a/q\in\mathbb{Q}$ with $(a,q)=1$ and $q$ satisfying $K^{1/\tau-\varepsilon}w^{-1}<q\leq K$ such that
\[
\left|\alpha w-\frac{a}{q}\right|\leq \frac{1}{qK}.
\]
\end{Lemma}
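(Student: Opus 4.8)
The plan is to obtain both bounds on $q$ from two classical inputs: Dirichlet's approximation theorem applied to the single real number $\alpha w$, which supplies the upper bound $q\leq K$ together with the displayed approximation inequality, and the finite-type hypothesis on $\alpha$ recorded in the introduction, which forces the lower bound $q>K^{1/\tau-\varepsilon}w^{-1}$.

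First I would apply Dirichlet's theorem to the real number $\alpha w$ with approximation parameter $K$: there exists a reduced fraction $a/q$ with $(a,q)=1$ and $1\leq q\leq K$ such that $|\alpha w-a/q|\leq 1/(qK)$. This already delivers the upper bound $q\leq K$ and the inequality claimed in the lemma, so only the lower bound on $q$ remains to be established. For that, I would multiply the approximation inequality by $q$ and use that $a$ is an integer to get $\|qw\alpha\|\leq |qw\alpha-a|=q|\alpha w-a/q|\leq 1/K$. On the other hand, since $\alpha$ is of finite type $\tau$, applying the Diophantine lower bound to the positive integer $qw$ gives $\|qw\alpha\|\geq c(\varepsilon,\alpha)(qw)^{-\tau-\varepsilon}$. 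Comparing the two estimates yields $(qw)^{\tau+\varepsilon}\geq c(\varepsilon,\alpha)\,K$, and hence $q\geq c(\varepsilon,\alpha)^{1/(\tau+\varepsilon)}K^{1/(\tau+\varepsilon)}w^{-1}$.

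The only point demanding a little care, rather than any genuine difficulty, is converting the exponent $1/(\tau+\varepsilon)$ into the stated $1/\tau-\varepsilon$ and absorbing the fixed constant $c(\varepsilon,\alpha)^{1/(\tau+\varepsilon)}$. Since $1/(\tau+\varepsilon)=1/\tau-\varepsilon/(\tau(\tau+\varepsilon))$, a preliminary rescaling of $\varepsilon$ makes the exponent at least $1/\tau-\varepsilon/2$, after which the positive constant is swallowed by a further factor $K^{-\varepsilon/2}$ once $K$ is sufficiently large; this produces $q>K^{1/\tau-\varepsilon}w^{-1}$ as required. I expect no substantive obstacle: the whole argument is essentially a two-line coupling of Dirichlet's theorem with the finite-type lower bound, with the remaining work confined to the choice and relabelling of the $\varepsilon$'s. (Note that no non-vacuousness hypothesis on the interval $(K^{1/\tau-\varepsilon}w^{-1},K]$ is needed, since the derivation constructs a $q$ lying in it; when $w$ is large enough that $K^{1/\tau-\varepsilon}w^{-1}<1$, the lower bound is automatic.)
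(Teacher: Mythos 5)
Your proposal is correct and follows essentially the same route as the paper: Dirichlet's theorem applied to $\alpha w$ with parameter $K$ for the upper bound and the approximation inequality, then $\|qw\alpha\|\leq 1/K$ played against the finite-type lower bound $\|qw\alpha\|\gg (qw)^{-\tau-\varepsilon}$ to force $q$ large. If anything, you are more careful than the paper, which silently absorbs the type constant and the passage from the exponent $1/(\tau+\varepsilon)$ to $1/\tau-\varepsilon$ into the ``$K$ sufficiently large'' and $\varepsilon$-rescaling conventions that you spell out explicitly.
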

\begin{proof}
By Dirichlet approximation theorem, there is a rational number $a/q$ with $(a,q)=1$ and $q\leq K$ such that
\[
\left|\alpha w-\frac{a}{q}\right|<\frac{1}{qK}.
\]
Then we have
\[
\parallel qw\alpha\parallel\leq \frac1K.
\]
Since $\alpha$ is of type $\tau<\infty,$ for sufficiently large $K,$ we have
\[
\parallel qw\alpha\parallel\geq (qw)^{-\tau-\varepsilon}.
\]
Then we have
\[
1/K\geq \parallel qw\alpha\parallel\geq (qw)^{-\tau-\varepsilon}.
\]
This gives that
\[
q\geq K^{1/\tau-\varepsilon}w^{-1}.
\]
\end{proof}

\begin{Lemma}\label{z3}
For an arbitrary real number $\theta$ and coprime integers $c$ and $d$ with $0\leq c<d$, we have the uniform bound
\[
\sum_{\substack{1\leq  n\leq x  \\
n\equiv c(\textup{mod}d)}}\Lambda(n)e(\theta n)
\ll \left(q^{-1/2}x+q^{1/2}x^{1/2}+x^{4/5}
\right)(\log x)^{3}
\]
whenever the inequality $|\theta-a/q|\leq1/qx$ holds with some real $x>1$ and coprime integers a and $q\geq1.$
\end{Lemma}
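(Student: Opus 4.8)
The plan is to estimate the sum by Vaughan's identity, keeping the congruence $n\equiv c\ (\mathrm{mod}\ d)$ and the twist $e(\theta n)$ in place throughout. Writing $S=\sum_{n\le x,\,n\equiv c(d)}\Lambda(n)e(\theta n)$ and applying Vaughan's identity with parameters $U,V\ge 1$ to be chosen, we decompose $S$ into a bounded number of sums of two shapes: Type I sums $\sum_{m\le U'}a_m\sum_{k\le x/m,\,mk\equiv c(d)}e(\theta mk)$ with $a_m\ll\log x$, and Type II sums $\sum_{m\sim M}\sum_{k\sim L}a_m b_k\,\mathbf 1_{mk\equiv c(d)}\,e(\theta mk)$ with $ML\le x$, dyadic ranges $M>U$, $L>V$, and coefficients bounded by $\ll(mk)^{\varepsilon}$. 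The final bound is to be assembled by optimizing $U$ and $V$ (the choice $U=V=x^{2/5}$ being the relevant one) and accounting for the logarithmic factors produced by the coefficients, which is what yields the displayed power $(\log x)^3$.

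For the Type I sums the congruence is harmless. Since $\gcd(c,d)=1$, the condition $mk\equiv c\ (\mathrm{mod}\ d)$ forces $\gcd(m,d)=1$ and pins $k$ to a single residue class $k\equiv c\overline m\ (\mathrm{mod}\ d)$; writing $k=k_0+dt$ the inner sum becomes a geometric progression in $t$, so it is $\ll\min\!\big(x/(md),\,\|\theta md\|^{-1}\big)$. Summing over $m\le U'$ leads to a sum of the classical shape $\sum_{m\le U'}\min\!\big((x/d)/m,\,\|(\theta d)m\|^{-1}\big)$ with frequency $\theta d$. From $|\theta-a/q|\le 1/(qx)$ one obtains an approximation $|\theta d-a'/q'|\le d/(qx)$ with $q'=q/\gcd(d,q)\le q$, and the standard estimate for such $\min$-sums gives a contribution $\ll\big(x/q+U'+q\big)(\log x)^{O(1)}$. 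The key point is that $x/(dq')=x\gcd(d,q)/(dq)\le x/q$, so the $d$ cancels and the Type I total is dominated by the first and third terms of the asserted bound, uniformly in $c$ and $d$.

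The real work is in the Type II sums. Applying Cauchy--Schwarz in $m$ and expanding the square produces $\sum_{k,k'\sim L}b_k\overline{b_{k'}}\sum_{m\sim M,\,mk\equiv c,\,mk'\equiv c(d)}e(\theta m(k-k'))$. Because $\gcd(c,d)=1$, the two congruences force $\gcd(m,d)=1$, $k\equiv k'\ (\mathrm{mod}\ d)$, and confine $m$ to a single residue class modulo $d$; hence the inner $m$-sum is again geometric and is $\ll\min\!\big(M/d,\,\|\theta d(k-k')\|^{-1}\big)$. The diagonal $k=k'$ contributes $\ll LM/d$, while in the off-diagonal the difference $h=k-k'$ is a nonzero multiple of $d$ with $|h|\ll L$, so setting $h=dh'$ one meets $\sum_{0<|h'|\ll L/d}\min\!\big(M/d,\,\|(\theta d^2)h'\|^{-1}\big)$. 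The whole estimate hinges on the fact that the division by $d$ coming from the thinned ranges exactly compensates the loss incurred by passing to the frequencies $\theta d$ and $\theta d^2$ (whose rational approximations have the smaller denominators $q/\gcd(d,q)$ and $q/\gcd(d^2,q)$), so that after taking the square root the Type II contribution is no larger than in the unrestricted case and is $\ll\big(x/\sqrt q+\sqrt{xq}+x^{4/5}\big)(\log x)^{O(1)}$, again free of $d$.

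Verifying this cancellation, and thereby showing that the congruence can only help, will be the main obstacle: one must check that in every regime of $M,L$ the $1/d$ savings in the number of admissible pairs and terms dominates the weakening of the rational approximation. Once this is established, combining the Type I and Type II estimates and balancing $U$ and $V$ with the choice $U=V=x^{2/5}$ yields the stated uniform bound, completing the proof.
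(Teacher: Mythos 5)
For this lemma the paper offers no argument of its own: its ``proof'' is the citation ``See \cite{BP} or \cite{La}'', and your plan---Vaughan's identity with the congruence $n\equiv c\ (\mathrm{mod}\ d)$ kept in place, a Type I/Type II decomposition, and min-sum estimates---is in spirit exactly the method of \cite{BP}. The problem is that, as a proof, your proposal stops short of the one point that makes this lemma a theorem rather than an exercise, and you say so yourself in the last paragraph: the uniformity in $d$, i.e.\ the claim that the $1/d$ savings from the thinned ranges ``exactly compensates'' the degradation of the rational approximation when the frequency changes from $\theta$ to $\theta d$ and $\theta d^2$. That compensation is the entire content of the lemma (for $d=1$ it is just Vaughan's classical bound), so deferring its verification leaves the proof essentially empty at its core.

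Moreover, two of the intermediate steps fail as written. In Type I you pass to the frequency $\theta d$, note $|\theta d-a'/q'|\le d/(qx)$ with $q'=q/\gcd(d,q)$, and invoke ``the standard estimate'' for $\sum_{m}\min\bigl((x/d)/m,\|\theta d\,m\|^{-1}\bigr)$; but that estimate requires an approximation of quality $\le 1/q'^2$, and $d/(qx)\le 1/q'^2$ amounts to $dq'^2\le qx$, which fails in ranges the lemma must cover (e.g.\ $\gcd(d,q)=1$, $d\le x^{1/10}$, $q>x^{9/10}$, where the trivial bound $x/d$ is still too big, so these ranges cannot be discarded). In Type II the situation is worse: the reduced denominator for $\theta d^2$ is $q''=q/\gcd(d^2,q)$, possibly as small as $q/d^2$, so the displayed bounds do not yield a contribution ``free of $d$''. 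The workable route---essentially what \cite{BP} does---never reduces the fraction at all: substitute $n=dm$ (resp.\ $n=d(k-k')$) so that the frequency stays $\theta$, sum $\min(\cdot,\|\theta n\|^{-1})$ over multiples of $d$ in a range lengthened by the factor $d$, and then check, regime by regime in $M$, $L$, $q$, $d$, that the $1/d$ saved in the counting dominates the lengthening; that case analysis is the actual proof and is absent here. A secondary slip: you bound the Type II coefficients by $(mk)^{\varepsilon}$, which can only produce $x^{\varepsilon}$ losses and therefore cannot recover the stated factor $(\log x)^{3}$; one needs $a_m\ll\log m$ together with the divisor moment $\sum_{k\le x}d(k)^2\ll x(\log x)^3$ inside the Cauchy--Schwarz step.
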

\begin{proof}
See \cite{BP} or \cite{La}.
\end{proof}

\begin{Lemma}\label{z4}
Let $\gamma$ be an irrational number of finite type $\tau<\infty.$ For any coprime integers $c$ and $d$ with $0\leq c<d$ and any non-zero integer $k$ such that $|k|\leq x^{1/(3\tau+2)},$ we have the upper bound
\[
\sum_{\substack{1\leq n\leq x    \\
n\equiv c(\textup{mod}d)}}\Lambda(n)e(k\gamma n)
\ll x^{1-1/(3\tau+2)+\varepsilon},
\]
where the implied constant depends only on the parameters $\gamma.$
\end{Lemma}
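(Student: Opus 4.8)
The plan is to reduce the bound to the Balog--Perelli/Vinogradov-type estimate of Lemma \ref{z3} applied to $\theta=k\gamma$, after producing a rational approximation to $k\gamma$ whose denominator lies in an optimally chosen range. Without loss of generality I take $k\geq 1$, since replacing $k$ by $-k$ merely conjugates the sum. The essential freedom is the choice of approximation parameter: rather than approximating $k\gamma$ with denominators up to $x$, I would apply Lemma \ref{z2} with $w=k$ and with $K=Q$, where $Q=(kx)^{\tau/(\tau+1)}$ is chosen to balance the first two terms of Lemma \ref{z3}. This produces coprime integers $a,q$ with
\[
Q^{1/\tau-\varepsilon}k^{-1}<q\leq Q,\qquad \Big|k\gamma-\frac{a}{q}\Big|\leq\frac{1}{qQ}\leq\frac1{q^{2}},
\]
so that the hypothesis of Lemma \ref{z3} is met with the parameter $Q$. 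Note that $Q<x$ here, since $k\leq x^{1/(3\tau+2)}<x^{1/\tau}$.

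Feeding this approximation into Lemma \ref{z3} gives, uniformly in $c,d$,
\[
\sum_{\substack{n\leq x\\ n\equiv c(\textup{mod}d)}}\Lambda(n)e(k\gamma n)\ll\Big(q^{-1/2}x+q^{1/2}x^{1/2}+x^{4/5}\Big)(\log x)^{3}.
\]
The three terms are then estimated using the two-sided control on $q$: the lower bound $q>Q^{1/\tau-\varepsilon}k^{-1}$ handles the first term, the upper bound $q\leq Q$ handles the second, and the third is trivial. With $Q=(kx)^{\tau/(\tau+1)}$ the first and second terms both come out equal to $k^{\tau/(2(\tau+1))}\,x^{(2\tau+1)/(2(\tau+1))}$ up to a factor $x^{\varepsilon}$, while the third term $x^{4/5}$ is admissible because $\tau\geq1$ forces $4/5\leq 1-1/(3\tau+2)$. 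Finally I would invoke the hypothesis $k\leq x^{1/(3\tau+2)}$: substituting it into the common exponent and simplifying via $(3\tau+1)(\tau+1)=3\tau^{2}+4\tau+1$ yields $\frac{3\tau^{2}+4\tau+1}{(\tau+1)(3\tau+2)}=1-\frac{1}{3\tau+2}$, so that both dominant terms are $\ll x^{1-1/(3\tau+2)+\varepsilon}$. This is precisely where the constraint on $k$ is sharp, the exponent $1/(3\tau+2)$ being exactly calibrated so that the admissible range of $k$ and the balanced error term agree. All implied constants depend only on $\gamma$ (through $\tau$ and the type constant in Lemma \ref{z2}), giving the stated uniformity in $c$ and $d$.

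The main obstacle I anticipate is controlling $q$ from above. The finite-type condition, through Lemma \ref{z2}, supplies only a \emph{lower} bound for the denominator, so the second term $q^{1/2}x^{1/2}$ cannot be handled by approximating $k\gamma$ with denominators up to $x$: there $q$ may be as large as $x$, rendering that term trivial. The device that overcomes this is exactly the optimized choice of the Dirichlet parameter $K=Q$, which caps $q\leq Q$ with $Q$ strictly below $x$ while keeping the lower bound large enough for the first term. Verifying that this single choice of $Q$ simultaneously keeps all three terms below $x^{1-1/(3\tau+2)+\varepsilon}$, and that it is the optimal balance, is the heart of the argument.
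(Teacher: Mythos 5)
Your proposal is correct and takes essentially the same route as the paper: your choice $Q=(kx)^{\tau/(\tau+1)}$ is exactly the paper's $K=x^{\delta(\tau,w,x)}$ with $w=k$ (since $x^{\delta(\tau,w,x)}=(wx)^{\tau/(\tau+1)}$), and your two-sided control $Q^{1/\tau-\varepsilon}k^{-1}<q\leq Q$ fed into Lemma \ref{z3} is precisely the paper's balancing of $xq^{-1/2}$, $x^{1/2}q^{1/2}$ and $x^{4/5}$ via Lemma \ref{z2}. Your explicit verification that the common exponent equals $1-1/(3\tau+2)$ merely spells out arithmetic the paper leaves implicit.
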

\begin{proof}
By Lemma \ref{z2}, let
$$k=w\leq[x^{1/(3\tau+2)}]$$
and
$$K=x^{\delta(\tau,w,x)},$$
where
$$\delta(\tau,w,x)=\frac{\left(1+\frac{\log w}{\log x}\right)}{\left(1+1/\tau\right)}.$$
Hence we have
\[
K \asymp x^{3\tau/(3\tau+2)}.
\]
This means that for
\[
K^{1/\tau-\varepsilon}w^{-1}\ll q\ll K,
\]
we have
\[
x^{2/(3\tau+2)}\ll q\ll x^{3\tau/(3\tau+2)},
\]

Then by the Dirichlet approximation theorem, Lemma \ref{z3} and the fact that $\tau\geq1,$ we can get that
\[
\sum_{\substack{1\leq  n\leq x   \\
n\equiv c(\textup{mod}d)}}\Lambda(n)e(k\gamma n)
\ll x^{\varepsilon}\left(xq^{-1/2}+x^{4/5}+x^{1/2}q^{1/2}\right)
,\]
where
\[
K^{1/\tau-\varepsilon}w^{-1}\ll q\ll K.
\]
With the above choice of $K$ and $w$, we can obtain the desired conclusion immediately.
\end{proof}

Now we begin to prove Theorem \ref{th2}.
Obviously, for positive integers $m,n,$
by the relation such that
\[
m\leq n\alpha+\beta<m+1,
\]
for fixed $\beta$ of the form $\beta\neq m_{0}+k\alpha,$ $k\in\mathbb{N}$ and $1\leq m_{0}\leq N+1,$ we have
\[\sum_{1\leq  n\leq N,\ n\in\mathcal{B}_{\alpha,\beta}\atop
n\equiv c(\textup{mod}d)}\Lambda(n)
=\sum_{1\leq  m\leq N\atop m \equiv c (\textup{mod}d)}\Lambda(m)
\left(\left[\frac{m+1-\beta}{\alpha}\right]
-\left[\frac{m-\beta}{\alpha}\right]
\right).\]
For for fixed $\beta$ of the form $\beta= m_{0}+k\alpha,$ $k\in\mathbb{N}$ and $m_{0}$ being a fixed integer in the interval $[1,N+1],$ we have
\[\sum_{1\leq n\leq N,\ n\in\mathcal{B}_{\alpha,\beta}\atop
n\equiv c(\textup{mod}d)}\Lambda(n)
=\sum_{1\leq m\leq N\atop m \equiv c (\textup{mod}d)}\Lambda(m)
\left(\left[\frac{m+1-\beta}{\alpha}\right]
-\left[\frac{m-\beta}{\alpha}\right]
\right)+O(\log N).\]
Hence we have
\begin{align*}
\sum_{1\leq n\leq N,\ n\in\mathcal{B}_{\alpha,\beta}\atop
n\equiv c(\textup{mod}d)}\Lambda(n)
&=\sum_{1\leq m\leq N\atop m \equiv c (\textup{mod}d)}\Lambda(m)
\left(\left[\frac{m+1-\beta}{\alpha}\right]
-\left[\frac{m-\beta}{\alpha}\right]
\right)\\
&=\alpha^{-1}\sum_{1\leq m\leq N\atop m \equiv c (\textup{mod}d)}\Lambda(m)\\
&+\sum_{1\leq m\leq N\atop m \equiv c (\textup{mod}d)}\Lambda(m)
\left(\psi\left(\frac {m-\beta}{\alpha}\right)-\psi\left(\frac {m-\beta+1}{\alpha}\right)\right),
\end{align*}
where
$\psi(x)=x-[x]-1/2.$
Let
\[
S_{\mathcal{D}}=\sum_{1\leq  m\leq N\atop m \equiv c (\textup{mod}d)}\Lambda(m)
\psi\left(\frac {m-\beta+\mathcal{D}}{\alpha}\right),\ \mathcal{D}=0,1.
\]
By Lemma \ref{z1}, we have
\begin{align*}
\psi(t)&\leq \psi^{*}(t)+1/2(H+1)
\\&+1/2(H+1)\sum_{1\leq |h|\leq H}\left(1-|h|/(H+1)\right)e(ht)\\
&\leq 1/2(H+1)
\\&+\sum_{1\leq |h|\leq H}
\left(\mathcal{R}(h)+\frac{1}{2H+2}\left(
1-\frac{|h|}{H+1}\right)\right)e(ht)
\end{align*}
and
\begin{align*}
\psi(t)&\geq \psi^{*}(t)-1/2(H+1)
\\&-1/2(H+1)\sum_{1\leq |h|\leq H}\left(1-|h|/(H+1)\right)e(ht)\\
&\geq -1/2(H+1)
\\&+\sum_{1\leq |h|\leq H}
\left(\mathcal{R}(h)-\frac{1}{2H+2}\left(
1-\frac{|h|}{H+1}\right)\right)e(ht),
\end{align*}
where $\mathcal{R}(h)$ is defined by Lemma \ref{z1}.
Hence taking $H=N^{1/(3\tau+2)}$ in Lemma \ref{z1}, we have
\begin{align*}
S_{\mathcal{D}}&\ll H^{-1}\sum_{1\leq m\leq N\atop m \equiv c (\textup{mod}d)}\Lambda(m)\\
&+\sum_{1\leq|h|\leq H}|h|^{-1}\left|\sum_{1\leq m\leq N\atop m \equiv c (\textup{mod}d)}\Lambda(m)e(h(m-\beta+\mathcal{D})/\alpha)
\right|.
\end{align*}
Note that $\alpha$ and $\gamma=\alpha^{-1}$ are of the same type. This means that $\tau(\alpha)=\tau(\gamma)$ (see page 133 in \cite{BY}).
Then by the above assuming and Lemma \ref{z4} (exponential sums), for $0<h\leq H=N^{1/(3\tau+2)},$ we have
\begin{align*}
S_{\mathcal{D}}&\ll N^{1-1/(3\tau+2)+\varepsilon}.
\end{align*}
Hence we have
\begin{align*}
\sum_{1\leq n\leq N,\ n\in\mathcal{B}_{\alpha,\beta}\atop
n\equiv c(\textup{mod}d)}\Lambda(n)
=\alpha^{-1}\sum_{1\leq m\leq N\atop m \equiv c (\textup{mod}d)}\Lambda(m)
+O(N^{1-1/(3\tau+2)+\varepsilon}).
\end{align*}
This completes the proof of Theorem \ref{th2}.
\vspace{0.5cm}

\vspace{0.5cm}

\section{Proof of Theorem \ref{zpc}}
In order to give the proof of Theorem \ref{zpc}, we need to introduce some lemmas.
\begin{Lemma}\label{1c}
Suppose that $1<c<2$ and $\gamma=1/c.$ Let $z_{1},z_{2},\cdots$ be complex numbers
such that $z_{k}\ll k^{\varepsilon}$. Then we have
\[
\sum_{1\leq k \leq K,\ k=[n^{2}]}z_{k}
=\gamma\sum_{1\leq k\leq K}z_{k}k^{\gamma-1}+
z_{k}\left(\psi(-(k+1)^{\gamma}
-\psi(-k^{\gamma}))\right)+O(1).
\]
\end{Lemma}
\begin{proof}
See Lemma 2 in \cite{BBB}.
\end{proof}
\begin{Lemma}\label{2t}
Let $f$ be two times continuously differentiable on a subinterval
$\mathcal{I}$ of $(N, 2N].$
Suppose that for some $\lambda> 0,$ the inequalities
\[\lambda\ll f''(t)\ll \lambda\ \ (t\in \mathcal{I})
\]
hold, where the implied constants are independent of $f$ and $\lambda$. Then
\[
\sum_{n\in \mathcal{I}}e(f(n))\ll
N\lambda^{1/2}+
\lambda^{-1/2}.
\]
\end{Lemma}
\begin{proof}
See Theorem   2.2 of Graham and Kolesnik \cite{GK}.
\end{proof}
\begin{Lemma}\label{h}
Let
\[
L(Z)=\sum_{i=1}^{u}A_{i}Z^{a_{i}}
+\sum_{j=1}^{v}B_{j}Z^{-b_{j}},
\]
where $A_{i},a_{i},B_{j},b_{j}$ are positive. Let $0\leq Z_{1}\leq Z_{2} .$ Then there is
some $Z \in(Z_{1}, Z_{2}]$ with
\[
L(Z)\ll \sum_{i=1}^{u}\sum_{j=1}^{v}
\left(A_{i}^{b_{j}}B_{j}^{a_{i}}\right)
^{1/(a_{i}+b_{j})}
+\sum_{i=1}^{u}A_{i}Z_{1}^{a_{i}}
+\sum_{j=1}^{v}B_{j}Z_{2}^{-b_{j}},
\]
where the implied constant depends only on $u$ and $v.$
\end{Lemma}
\begin{proof}
See  Lemma 2.4 of \cite{GK}
\end{proof}

\begin{Lemma}\label{iq}
Let $1<Q\leq L.$ If $f$ is a function of the form $f(n)=e(g(n)),$
then
\[
\left|{\sum_{k\sim K}\sum_{l\sim L}} a_{k}b_{l} e\left(kl\right)\right|^{2}
\ll X^{2}Q^{-1}+XQ^{-1}\sum_{0<|q|< Q}\sum_{l\sim L}|S(q,l)|,\]
where
\[
S(q,l)=\sum_{k\in I(q,l)}e (g(kl)-g(k(l+q)))
\]
for a certain subinterval $I(q,l)$ of
$(X,X_{1}].$
\end{Lemma}
\begin{proof}
See Lemma 15 in \cite{GK}.
\end{proof}

\begin{Lemma}\label{t1}
Suppose $|a_{k}|\leq 1$ for all $k\sim K.$ Fix $\gamma\in(0, 1)$ and $m, h, d \in \mathbb{N.}$
Then, for any $L \gg N^{2/3},$ the Type $I$ sum
\[
S_{I}:={\sum_{k\sim K}\sum_{l\sim L}} a_{k}  e\left(hk^{\gamma}l^{\gamma}
+\frac{kl}{d}\right)
\]
satisfies the bound
\[S_{I}\ll m^{1/2}N^{1/3+\gamma/2}+ m^{-1/2}N^{1-\gamma/2}.\]
\end{Lemma}
\begin{proof}
See Lemma 11 in \cite{BBB}.
\end{proof}
By Lemma \ref{1c}, we have
\[
\pi_{c}(x,q,a):=S_{1}+S_{2}+O(1),
\]
where
\[
S_{1}=\gamma \sum_{\substack{1\leq p\leq x\\ p\equiv a(\mod q)}}p^{\gamma-1},
\]
\[
S_{2}=\gamma \sum_{\substack{1\leq p\leq x\\ p\equiv a(\mod q)}}\left(\psi(-(p+1)^{\gamma})
-\psi(-p^{\gamma})\right).
\]
By using partial summation, we have
\begin{align}\label{l1}
S_{1}=\gamma x^{\gamma-1}\pi(x,q,a)
-\gamma(\gamma-1)\int_{2}^{x}u^{\gamma-2}
\pi(u,q,a) du
\end{align}
and
\begin{align}\label{l2}
S_{2}\ll S+x^{1/2},
\end{align}
where
\begin{align}\label{l3}
S:=\sum_{\substack{N<n\leq 2N\\ [n^{c}]\equiv a(\mod q) }}\Lambda(n)\left(\psi(-(n+1)^{\gamma})
-\psi(-n^{\gamma})\right).
\end{align}
Arguing similar as in  \cite{BBB}, for any real number  we derive the
uniform bound
\begin{align}\label{l4}
S\ll N^{\gamma-1}\max_{N_{1}\sim N}
\sum_{1\leq h\leq H}\left(
\sum_{\substack{N<n\leq 2N_{1}\\ [n^{c}]\equiv a(\textup{mod}\ q) }}\Lambda(n)e(hn^{\gamma})\right)
+NH^{-1}+N^{\gamma/2}H^{1/2}.
\end{align}
To bound the inner sum, we note that
\begin{align}\label{l5}
\sum_{\substack{N<n\leq 2N\\ [n^{c}]\equiv a(\textup{mod}\ q) }}\Lambda(n)e(hn^{\gamma})
=\frac1q\sum_{1\leq t\leq q}\sum_{N<n\leq 2N}\Lambda(n)e\left(hn^{\gamma}
+\frac{(n-a)t}{q}\right).
\end{align}
Hence it suffices to give a bound on exponential sums of the form
\[
\sum_{N<n\leq 2N}\Lambda(n)e\left(hn^{\gamma}
+\frac{(n-a)t}{q}\right).
\]
In order to give the estimation for the above exponential sum, we need the following lemma.
\begin{Lemma}\label{t2}
Suppose $|a_{k}|\leq 1$ and $|b_{k}|\leq 1$ for $(k,l)\sim(K, L).$ Fix $\gamma\in(0, 1)$
and $h, t, d\in\mathbb{N}.$ Then, for any $K$ in the range  $N^{1/2}\ll K\ll N^{2/3},$  and $KL \sim N,$
 the
Type $II$ sum
\[
S_{II}:={\sum_{k\sim K}\sum_{l\sim L}} a_{k}b_{l} e\left(hk^{\gamma}l^{\gamma}
+\frac{kl}{d}\right)
\]
satisfies the bound
\[
|S_{II}|  \ll m^{1/6}N^{3/4+\gamma/6}+
N^{7/8}+m^{1/4}N^{5/8+\gamma/4}+ m^{-1/4}N^{1-\gamma/4}.
\]
\end{Lemma}
\begin{proof}
Assume that $KL\asymp N.$ By Lemma \ref{iq} , for each $I(q; l)$ being a certain subinterval in the set of numbers $k\sim K,$ we have
\[
|S_{II}|^{2}\ll K^{2}L^{2}Q^{-1}+
KLQ^{-1}\sum_{l\sim L}\sum_{1\leq |q|\leq Q}|S(q,l)|,
\]
where
\[
S(q,l)=\sum_{k\in I(q,l)}e\left(F(k)\right)
\]
and
\[
F(k)=hk^{\gamma}(l^{\gamma}
-(l+q)^{\gamma})-kqh/d.
\]
Since
\[
|F''(k)| =h\gamma
(1-\gamma)k^{\gamma-2}\left((l+q)^{\gamma}
-l^{\gamma}\right)\asymp mK^{\gamma-2}
L^{\gamma-1}q
\]
It follows from Lemma \ref{2t} that
\[
S(q,l)\ll K(hK^{\gamma-2}L^{\gamma-1})^{1/2}
+ (hK^{\gamma-2}L^{\gamma-1}q)^{-1/2}
\]
Inserting this bound in   and summing over $l$ and $q$, we derive that
\begin{align*}
|S_{II}|^{2}&\ll K^{2}L^{2}Q^{-1}
+m^{1/2}K^{1+\gamma/2}
L^{3/2+\gamma/2}Q^{1/2}+
m^{-1/2}
K^{2-\gamma/2}L^{5/2-\gamma/2}Q^{-1/2}\\
&\ll N^{2}Q^{-1}+m^{1/2}K^{-1/2}N^{3/2+\gamma/2}
Q^{1/2}
+m^{-1/2}K^{-1/2}N^{5/2-\gamma/2}Q^{-1/2},
\end{align*}
where we used $KL \asymp N.$ Since the above
holds whenever $0<Q \leq L,$ an application of Lemma \ref{h} gives
\[
|S_{II}|^{2}\ll m^{1/3}K^{-1/3}N^{5/3+\gamma/3}
+K^{-1/2}N^{2}+m^{1/2}K^{-1/2}N^{3/2+\gamma/2}
+KN+ m^{-1/2}N^{2-\gamma/2}
.
\]
Finally, for $K$ in the range $N^{1/2}\ll K \ll N^{2/3}$, we arrive at the bound
\[
|S_{II}|^{2}\ll m^{1/3}N^{3/2+\gamma/3}+N^{7/4}
+m^{1/2}N^{5/4+\gamma/2}
+m^{-1/2}N^{2-\gamma/2},
\]
and the result follows.
\end{proof}
To deal with the von Mangoldt function, we also need the following Vaughan's identity (for example, see \cite{BBB} and the references therein).
\begin{Lemma}\label{pv} There are six real arithmetical functions $\alpha_{k}(n)$ verifying $\alpha_{k}(n)\ll _{\varepsilon} n^{\varepsilon}$
for
($n>1, 1\leq k\leq 6$) such that, for all $D>100$ and any arithmetical function $g,$ we have
\[\sum_{D<d\leq 2D}
\Lambda(d)g(d) = S_{1} +  S_{2} +  S_{3} +  S_{4},\]
where
\begin{align*}
&S_{1}=\sum_{m\leq D^{1/3}}\alpha_{1}(m)\sum_{D<mn\leq 2D}g(mn),\\
&S_{2}=\sum_{m\leq D^{1/3}}\alpha_{2}(m)\sum_{D<mn\leq 2D}g(mn)\log n,\\
&S_{3}=\mathop{\sum\sum}_{\substack{D^{1/3}<m,n\leq D^{2/3}\\ D<mn\leq 2D}}\alpha_{3}(m)\alpha_{4}(n)g(mn),
\\&S_{4}=\mathop{\sum\sum}_{\substack{D^{1/3}<m,n\leq D^{2/3}\\ D<mn\leq 2D}}\alpha_{5}(m)\alpha_{6}(n)g(mn).
\end{align*}
The sums $S_{1}$ and $S_2$ are called as type $I,$ $S_3$ and $S_4$ are called as type $II.$
\end{Lemma}
Then by (\ref{l1})-(\ref{l5}), Lemma \ref{t1}, Lemma \ref{t2}, Lemma \ref{pv} and choosing
$$H=N^{3/14-\gamma/7},$$
for $1<c<12/11,$ we have
\begin{align*}
 \pi_{c}(x;q,a)&=\gamma x^{\gamma-1}\pi(x;q,a)-\gamma(\gamma-1)
 \int_{2}^{x}u^{\gamma-2}\pi(u;q,a)du
\\&+O\left(x^{11/14+\gamma/7+
\varepsilon}\right).
 \end{align*}

\bigskip
$\mathbf{Acknowledgements}$
I am deeply grateful to the referee(s) for carefully reading the manuscript and making useful suggestions.


\address{Wei Zhang\\ School of Mathematics and Statistics\\
               Henan University\\
               Kaifeng  475004, Henan\\
               China}
\email{zhangweimath@126.com}

\end{document}